\newtheorem{tm}{tm}[section]
\newtheorem{theorem}[tm]{Theorem}
\newtheorem{corollary}[tm]{Corollary}
\newtheorem{example}[tm]{Example}
\newcommand {\R} {\ensuremath{\mathbb{R}}}
\newcommand {\N} {\ensuremath{\mathbb{N}}}
\newcommand{\process}[1]{\{#1_t\}_{t\geq0}}
\newcommand{\processt}[1]{\{#1_t\}_{t\in\mathbb{T}}}
\newcommand{\chain}[1]{\{#1_n\}_{n\geq0}}
\numberwithin{equation}{section}
\begin{document}

 \title{A Note on the Birkhoff Ergodic Theorem}
 \author{Nikola Sandri\'{c}\\
Department of Mathematics\\
         Faculty of Civil Engineering, University of Zagreb, 10000 Zagreb,
         Croatia \\
        Email: nsandric@grad.hr }

 \maketitle
\begin{center}
{
\medskip

} \end{center}

\begin{abstract}
The classical Birkhoff ergodic theorem states that for an ergodic Markov process  the limiting behaviour of the time average of a   function (having finite $p$-th moment, $p\ge1$, with respect to the invariant measure)
along the trajectories of the  process,  starting from the invariant  measure,
is a.s. and in the $p$-th mean  constant and equals to the space average of the function with respect to the invariant measure. The crucial assumption here is that the process starts from the invariant measure, which is not always the case. In this paper,  under the assumptions that the underlying process is a  Markov process on  Polish space, that it admits an invariant probability measure and that its marginal distributions  converge to the invariant measure in the $L^{1}$-Wasserstein metric, we show that the assertion of
the Birkhoff ergodic theorem holds in  the $p$-th mean, $p\geq1$,  for any   bounded Lipschitz function and any initial distribution of the process.
\end{abstract}

\noindent{\small \textbf{AMS 2010 Mathematics Subject Classification:} 60J05, 60J25} \smallskip

\noindent {\small \textbf{Keywords and phrases:} Birkhoff ergodic theorem, ergodicity, Markov process, Wasserstein metric}

%
%
%
%


\section{Introduction}\label{s1}
One of the classical directions in the analysis of Markov processes are limit theorems
for Markov processes, such as the law of large numbers,  central limit theorem and  law of the iterated
logarithm. In this paper, we discuss a  version of the Birkhoff ergodic theorem (law of  large numbers) for a class of Markov processes.
 Let $\textbf{M}=(\Omega,\mathcal{F},\{\mathbb{P}^{x}\}_{x\in S},\processt{\mathcal{F}},\processt{\theta},  \processt{M})$
 be a (temporally homogeneous) \emph{normal Markov process} with state space $(S,\mathcal{S})$, in the sense of \cite{getoor}.
 Here,
 $S$ is a non-empty set, $\mathcal{S}$ is
a $\sigma$-algebra of subsets of $S$,  $(\Omega,\mathcal{F},\mathbb{P}^{x})_{x\in S}$ is a family of probability spaces, $\mathbb{T}$ is the time set   $\{0,1,2,\ldots\}$ or $[0,\infty)$,  $\processt{\mathcal{F}}$ is a filtration on $(\Omega,\mathcal{F})$ (non-decreasing family of sub-$\sigma$-algebras of $\mathcal{F}$) and  $\processt{\theta}$ is a family of shift operators on $\Omega$ satisfying $M_t\circ\theta_s=M_{t+s}$ for all $s,t\in\mathbb{T}$.
In the case when $\mathbb{T}=[0,\infty)$, assume that $\mathbf{M}$ is \emph{progressively measurable} (with respect to $\process{\mathcal{F}}$), that is, the map $(s,\omega)\longmapsto M_s(\omega)$ from $[0,t]\times\Omega$ to $S$ is $\mathcal{B}([0,t])\times\mathcal{F}_t/\mathcal{S}$ measurable for all $t\geq0$ (measurable in the pair of $\sigma$-algebras $\mathcal{B}([0,t])\times\mathcal{F}_t$ and $\mathcal{S}$, where $\mathcal{B}([0,t])$ denotes the Borel $\sigma$-algebra of subsets of $[0,t]$). Recall that if $S$ is a metric space and $\mathcal{S}$ is the Borel $\sigma$-algebra of  subsets of $S$, then  $\mathbf{M}$ is progressively measurable provided $t\longmapsto M_t(\omega)$ is right continuous for all $\omega\in\Omega$ (see \cite[Exercise I.6.13]{getoor}).
Further, denote by
$p^{t}(x,dy):=\mathbb{P}^{x}(M_t\in dy)$, $t\in\mathbb{T}$, $x\in S$,  the transition function of $\textbf{M}$. A  measure $\pi(dy)$ on
$\mathcal{S}$   is said to be \emph{invariant} for $\textbf{M}$ if
$$\int_{S}p^{t}(x,dy)\pi(dx)=\pi(dy),\qquad t\in\mathbb{T}.$$
A set $B\in\mathcal{F}$ is said to be
\emph{shift-invariant} (for $\textbf{M}$) if $\theta_t^{-1}B=B$ for all $t\in\mathbb{T}$. The
\emph{shift-invariant} $\sigma$-algebra $\mathcal{I}$ is a
collection of all such shift-invariant sets.
Now,  the celebrated Birkhoff ergodic theorem asserts that if a Markov process $\mathbf{M}$ admits an invariant (equilibrium) probability measure $\pi(dy)$, then
the limiting behaviour of the time average of a  function $f\in L^p(S,\pi)$, $p\ge1$,
along the trajectories of  $\textbf{M}$,  starting from  $\pi(dy)$,
  exists $\mathbb{P}^{\pi}$-a.s. and in  $L^p(\Omega,\mathbb{P}^{\pi})$, it is invariant (that is, it is measurable with respect to $\mathcal{I}$) and it is related to the space average of the function with respect to $\pi(dy)$. Here, $L^{p}(S,\pi)$ ($L^p(\Omega,\mathbb{P}^{\pi})$) denotes the space of all measurable functions $f:S\longrightarrow\R$ ($f:\Omega\longrightarrow\R$) with finite $p$-th moment with respect to $\pi(dy)$ ($\mathbb{P}^{\pi}(d\omega)$). For  a probability measure $\mu(dy)$ on $\mathcal{S}$,
$\mathbb{P}^{\mu}(d\omega)$ is defined as
$\mathbb{P}^{\mu}(d\omega):=\int_{S}\mathbb{P}^{y}(d\omega)\mu(dy).$ Also,  the collection of all probability measures on $\mathcal{S}$ is denoted by $\mathcal{P}(S)$.
\begin{theorem}{\rm(\cite[Theorems 9.6 and 9.8]{Kallenberg}).}\label{tm1.1}
Let $\textbf{M}$ be a Markov process with invariant probability measure $\pi(dy)$. Then, for any $f\in L^{p}(S,\pi)$, $p\ge1$, the following limit holds \begin{align}\label{eq1.1}\lim_{t\to\infty}\frac{1}{t}\int_{[0,t)}f(M_s)\tau(ds)=\mathbb{E}^{\pi}[f(M_0)|\mathcal{I}]\qquad \mathbb{P}^{\pi}\textrm{-a.s. and in}\ L^p(\Omega,\mathbb{P}^{\pi}),\end{align} where  $\tau(dt)$ is  the counting measure when $\mathbb{T}=\{0,1,2,\ldots\}$ and the Lebesgue measure when $\mathbb{T}=[0,\infty)$.
\end{theorem}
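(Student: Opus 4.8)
The plan is to reduce the assertion to the classical pointwise (and mean) ergodic theorem for a measure-preserving transformation, applied on the path space $(\Omega,\mathcal{F},\mathbb{P}^{\pi})$ rather than on $S$ itself. The first observation I would make is that, because $\pi(dy)$ is invariant for $\mathbf{M}$, the process started from $\pi$ is stationary; concretely, for every $t\in\mathbb{T}$ and every $B\in\mathcal{F}$,
$$\mathbb{P}^{\pi}(\theta_t^{-1}B)=\int_S\mathbb{P}^x(\theta_t^{-1}B)\,\pi(dx)=\mathbb{P}^{\pi}(B),$$
so that each shift $\theta_t$ preserves $\mathbb{P}^{\pi}$. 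I would then set $g:=f(M_0)$ and note that
$$\int_\Omega|g|^p\,d\mathbb{P}^{\pi}=\int_S|f(y)|^p\,\pi(dy)<\infty,$$
so $g\in L^p(\Omega,\mathbb{P}^{\pi})$; moreover the shift identity $M_s=M_0\circ\theta_s$ gives $g\circ\theta_s=f(M_s)$, which turns the left-hand side of \eqref{eq1.1} into an ergodic average of $g$ along the orbit of the shift.

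In the discrete-time case $\mathbb{T}=\{0,1,2,\dots\}$ the single transformation $T:=\theta_1$ is measure-preserving on $(\Omega,\mathcal{F},\mathbb{P}^{\pi})$ and the sum in \eqref{eq1.1} is exactly $\tfrac1n\sum_{k=0}^{n-1}g\circ T^k$. I would invoke Birkhoff's pointwise ergodic theorem to obtain $\mathbb{P}^{\pi}$-a.s.\ convergence to $\mathbb{E}^{\pi}[g\mid\mathcal{I}_T]$, where $\mathcal{I}_T$ is the $T$-invariant $\sigma$-algebra, and the mean ergodic theorem for the convergence in $L^p(\Omega,\mathbb{P}^{\pi})$ (which for $1<p<\infty$ follows from the $L^p$-boundedness of the ergodic maximal operator, and for $p=1$ from the a.s.\ convergence together with uniform integrability obtained by approximating $f$ by bounded functions). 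A small bookkeeping step would be to check that $\mathcal{I}_T$ coincides, up to $\mathbb{P}^{\pi}$-null sets, with the shift-invariant $\sigma$-algebra $\mathcal{I}$, so that the limit may be written as $\mathbb{E}^{\pi}[f(M_0)\mid\mathcal{I}]$.

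For the continuous-time case $\mathbb{T}=[0,\infty)$ the same scheme applies, but now $\{\theta_t\}_{t\geq0}$ is a measurable flow of measure-preserving maps and one needs the flow (continuous-parameter) version of the ergodic theorem. Here I would use the progressive measurability of $\mathbf{M}$ to guarantee that $(s,\omega)\mapsto g(\theta_s\omega)=f(M_s(\omega))$ is jointly measurable, so that the averages $\tfrac1t\int_{[0,t)}f(M_s)\,ds$ are well-defined and measurable, and then apply the continuous-parameter Birkhoff theorem to conclude a.s.\ and $L^p$ convergence to $\mathbb{E}^{\pi}[g\mid\mathcal{I}]$.

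I expect the main obstacle to be the continuous-time measurability and the transfer of the abstract flow ergodic theorem into the Markovian setting: one must verify joint measurability of the orbit map and the measure-preservation of the whole flow, with respect to the product of Lebesgue measure and $\mathbb{P}^{\pi}$, and not merely of each individual $\theta_t$, which is precisely why progressive measurability is imposed in the standing hypotheses. By contrast, the discrete-time identification of the invariant $\sigma$-algebras and the passage from $p=1$ to general $p\geq1$ in the mean convergence are comparatively routine.
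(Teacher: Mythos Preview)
The paper does not actually prove Theorem~\ref{tm1.1}; it is quoted verbatim from \cite[Theorems 9.6 and 9.8]{Kallenberg} as background, so there is no ``paper's own proof'' to compare against. Your outline is the standard argument and is essentially what is carried out in the cited reference: one transports the problem to the path space $(\Omega,\mathcal{F},\mathbb{P}^{\pi})$, checks that invariance of $\pi$ makes each $\theta_t$ measure-preserving (via the Markov property and $\int_S p^t(x,\cdot)\pi(dx)=\pi$), rewrites the time average as an ergodic average of $g=f(M_0)$ along the shift, and then invokes the classical Birkhoff/von Neumann theorems for a single transformation (discrete time) or a measurable flow (continuous time). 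Your identification of the potential sticking points---joint measurability of $(s,\omega)\mapsto f(M_s(\omega))$ in the continuous case (guaranteed here by progressive measurability) and the passage from a.s.\ to $L^p$ convergence---is accurate. One small simplification: in discrete time the semigroup property $\theta_n=\theta_1^{\,n}$ gives $\mathcal{I}=\mathcal{I}_T$ exactly, not merely modulo null sets.
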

A Markov process
$\textbf{M}$ is said to be
 \emph{ergodic} if it possesses an invariant probability measure $\pi(dy)$ and if  $\mathcal{I}$ is
trivial with respect to $\mathbb{P}^{\pi}(d\omega)$, that is,
$\mathbb{P}^{\pi}(B)=0$ or $1$ for every $B\in\mathcal{I}$.  Now, in addition to the assumptions of Theorem \ref{tm1.1}, if  $\textbf{M}$ is ergodic, then the relation in \eqref{eq1.1} reads as follows \begin{align}\label{eq1.2}\lim_{t\to\infty}\frac{1}{t}\int_{[0,t)}f(M_s)\tau(ds)=\int_{S}f(y)\pi(dy)\qquad \mathbb{P}^{\pi}\textrm{-a.s. and in}\ L^p(\Omega,\mathbb{P}^{\pi}).\end{align}

The main assumption in Theorem \ref{tm1.1} is that  the process starts from its equilibrium, which is not always the case. For $f\in L^{p}(S,\pi)$, $p\ge1$, define $$B_f:=\left\{\omega\in\Omega:\lim_{t\to\infty}\frac{1}{t}\int_{[0,t)}f(M_s(\omega))\tau(ds)=\int_{S}f(y)\pi(dy)\right\}.$$ Clearly, $B_f\in\mathcal{I}$. Thus, in order to conclude that \eqref{eq1.2} holds $\mathbb{P}^\mu$-a.s. for any $\mu\in\mathcal{P}(S)$ one expects that $\mathcal{I}$ should be  trivial  with respect to $\mathbb{P}^{\mu}(d\omega)$ for every $\mu\in\mathcal{P}(S)$.
Indeed, in  \cite[Theorem 17.1.7]{meyn-tweedie-book} it has been proved that the following are equivalent:
\begin{itemize}
	\item [(a)] $\mathbb{P}^{\mu}(B_f)=1$  for any    $f\in L^{p}(S,\pi(dy))$ and $\mu\in\mathcal{P}(S)$;
	\item[(b)]the shift-invariant $\sigma$-algebra $\mathcal{I}$ is $\mathbb{P}^{\mu}$-trivial for every $\mu\in\mathcal{P}(S)$.
\end{itemize}

Further, note that in order to conclude (a), it is necessary that $\pi(dy)$ is a unique invariant probability measure for $\mathbf{M}$ (hence, according to \cite[Corollary 5.12]{Hairer} or \cite[Proposition 2.5]{Bhattacharya},  $\textbf{M}$ is an ergodic Markov process). Namely, if $\textbf{M}$  admits more than one invariant probability
measure, it is ergodic with respect to  at least two mutually singular invariant probability measures   (see \cite[Theorem 5.7]{Hairer}). This leads to the conclusion that, in order to  conclude (a), 
certain  additional structural properties of $\textbf{M}$ will be necessary. In  \cite[Theorem 17.1.7]{meyn-tweedie-book} it has been also proved that    (a) and (b) are equivalent to 
\begin{itemize}
	\item [(c)]  $\textbf{M}$ is a positive Harris recurrent Markov process.
\end{itemize}
Recall, a Markov process $\mathbf{M}$ is called \textit{$\varphi$-irreducible} if for the $\sigma$-finite measure $\varphi(dy)$ on $\mathcal{S}$, $\varphi(B)>0$ implies $$\int_{\mathbb{T}}p^{t}(x,B)\tau(dt)>0,\qquad x\in S.$$ The process $\mathbf{M}$ is called \textit{Harris recurrent}  if it is $\varphi$-irreducible, and $\varphi(B)>0$ implies $$ \int_{\mathbb{T}}1_{\left\lbrace M_t\in B\right\rbrace }\tau(dt)=\infty\qquad \mathbb{P}^{x}\textrm{-a.s.}$$ for all $x\in S$. Further, according to \cite[Theorem 2.6]{twedie-topological} every Harris recurrent Markov process admits a unique (up to constant multiplies) invariant (not necessary  probability) measure. If the invariant measure is finite, then the process is called \textit{positive Harris recurrent}; otherwise it is called \textit{null Harris recurrent}. In the case when the process $\mathbf{M}$ is aperiodic, in \cite[Theorem 13.0.1]{meyn-tweedie-book} and \cite[Theorem 6.1]{meynII} it has been proved that (a), (b) and (c) imply 
\begin{itemize}
	\item [(d)]$\mathbf{M}$ is strongly ergodic. 
\end{itemize}
Recall, a discrete-time Markov process $\mathbf{M}$ is called \textit{$d$-periodic} if  $d\geq1$ is the largest integer for which  there is a partition $P_1,\ldots, P_d\in\mathcal{S}$ of $S$, such that $p(x,P_{i+1}) = 1$ for all $x\in P_i$ and all $1\leq i\leq d-1$, and $p(x, P_1) = 1$ for all $x\in P_d.$ If  $d=1$, then $\mathbf{M}$ is called \textit{aperiodic}. A continuous-time Markov process $\mathbf{M}$ is called \textit{aperiodic} if it admits  an irreducible skeleton chain, that is, there is $\delta>0$ such that the discrete-time Markov process $\left\lbrace M_{\delta n}\right\rbrace_{n\geq0}$ is irreducible. 
Further, denote by  $B_b(S)$ the space of all $\R$-valued, bounded and $\mathcal{S}/\mathcal{B}(\R)$ measurable  functions on $S$, where $\mathcal{B}(\R)$ denotes the Borel $\sigma$-algebra of subsets of $\R$. Also, denote by $d_{TV}$  the \emph{total variation metric} on $\mathcal{P}(S)$, given by $$d_{TV}(\mu(dy),\nu(dy)):=\frac{1}{2}\sup_{f\in B_b(S),\, |f|_{\infty}\leq1}\left|\int_Sf(y)\mu(dy)-\int_Sf(y)\nu(dy)\right|,\qquad \mu,\nu\in\mathcal{P}(S),$$ where $|\cdot|_{\infty}:=\sup_{x\in S}|\cdot|$ denotes  the supremum norm on $B_b(S)$.
Now,  a Markov process $\textbf{M}$ is said to be \emph{strongly ergodic} if there exists  $\pi\in\mathcal{P}(S)$ such that \begin{align}\label{eq1.4}\lim_{t\to\infty}d_{TV}(p^{t}(x,dy),\pi(dy))=0,\qquad x\in S.\end{align}
Note that the relation in \eqref{eq1.4} automatically implies: (i) $\pi(dy)$ is the only measure satisfying \eqref{eq1.4}, (ii) $\pi(dy)$ is necessarily an invariant measure for $\textbf{M}$ and (iii) $\pi(dy)$ is a unique invariant measure for $\textbf{M}$. In particular,   $\textbf{M}$ is  ergodic. In other words, strong ergodicity implies ergodicity. As one could expect, ergodcity  does not in general imply strong ergodicity (see Section \ref{s3}). Moreover, under aperiodicity assumption, (d) is actually equivalent to (a), (b) and (c). Indeed,
assume that $\textbf{M}$ is strongly ergodic with invariant probability measure $\pi(dy)$.  Then, by following \cite[Proposition 2.5]{Bhattacharya},  for any $x\in S$ and $f\in L^{p}(S,\pi)$ we have that
\begin{align}\label{eq1.5}\left|\mathbb{P}^{x}(B_f)-1\right|&=\left|\mathbb{E}^{x}\left[\mathbb{P}^{M_t}(B_f)\right]-\mathbb{E}^{\pi}\left[\mathbb{P}^{M_t}(B_f)\right]\right|\nonumber\\&=
	\left|\int_S\mathbb{P}^{y}(B_f)(p^{t}(x,dy)-\pi(dy))\right|\nonumber\\
	&\leq d_{TV}(p^{t}(x,dy),\pi(dy)),\end{align} which entails (a).

However, in many situations the processes we deal with  do not meet the properties from (a), (b), (c) and (d). For example, they even do not have to be irreducible   (see Section \ref{s3} for examples of such processes). 
Furthermore, in the discrete-time case,  in \cite{Hernandez-Lerma-Lasserre} it has been shown that if  a Markov process $\textbf{M}$ has a unique invariant probability measure $\pi(dy)$, then either  
\begin{itemize}
	\item [(i)]the relation in \eqref{eq1.4} holds $\pi$-a.e., or
	\item[(ii)]for $\pi$-a.e. $x\in S$, $\pi(dy)$ is singular with respect to $\sum_{t=1}^{\infty}p^{t}(x,dy)$ and $p^{t}(x,dy)$ converges weakly to $\pi(dy)$.
\end{itemize}
   Clearly, in the later case,
$p^{t}(x,dy)$ cannot converge in the total variation metric to $\pi(dy)$. This suggests that the a.s. convergence in (a) is  a too strong property (recall that under the aperiodicity assumption (a) and (d) are equivalent).
Also, the situation in (ii)  suggests that weak convergence might be the key property to  be analysed.
Accordingly, our main aim is to relax the notion of strong ergodicity and, under these new assumptions,   conclude     a relation of the form in \eqref{eq1.2} which holds for all $\mu\in\mathcal{P}(S)$.

In the sequel, assume the following additional structural properties of the state space $(S,\mathcal{S})$.
There exists a metric $d$ on $S$ inducing $\mathcal{S}$ (hence, $\mathcal{S}$ is the Borel $\sigma$-algebra induced by $d$) such that  $(S,d)$ is a Polish space, that is, a complete and separable topological space.  Further, denote by $Lip(S)$ the space of all  $\R$-valued Lipschitz continuous functions on $S$, that is,  functions $f:S\longrightarrow\R$ for which there exists $L_f\geq0$ such that
$$|f(x)-f(y)|\leq L_f\,d(x,y),\qquad x,y\in S.$$ The best
admissible constant
$L_f$
is then denoted by $|f|_{Lip}$.
Also, denote by $C(S)$ the space of all  $\R$-valued continuous functions on $S$ and let $C_b(S):=C(S)\cap B_b(S)$ be the space of all  $\R$-valued continuous and bounded functions on $S$. In this paper we will be concerned with bounded Lipschitz continuous functions only. The reason for that is explained below.
Observe that (i) $f\in Lip(S)\cap B_b(S)$ if, and only if, $f(x)$ is Lipschitz  continuous with respect to $\bar{d}:=d/(1+d)$  (or $1\wedge d$) and (ii) $\bar{d}$ induces the same topology as $d$. Here, $a\wedge b$ denotes the minimum of $a,b\in\R$.   Hence, without loss of generality we may assume that $d$ is bounded, say by $1$, that is, 
$\sup_{x,y\in S}d(x,y)\leq1.$ 
In particular, we have  $Lip(S)\subseteq C_b(S)$.
Now,  recall that the  \emph{Wasserstein metric} of order one (also known as the \emph{Kantorovich-Rubinshtein metric}), denoted by $d_W$, is a metric on  $\mathcal{P}(S)$  defined by $$d_W(\mu(dy),\nu(dy)):=\sup_{f\in Lip(S),\, |f|_{Lip}\leq1}\left|\int_Sf(y)\mu(dy)-\int_Sf(y)\nu(dy)\right|,\qquad \mu,\nu\in\mathcal{P}(S).$$ Observe that, since $d$ is bounded by $1$, $d_W\leq d_{TV}$ (see \cite[Theorem 6.15]{Villani}). In particular, $d_W\leq 1$. Also, if $d$ is the discrete metric on $S$, $d_W= d_{TV}$. Thus, the topology induced by $d_W$ is, in general, finer than the topology induced by $d_{TV}.$ Recall also that $d_W$ is  equivalent to the so-called \emph{modified Kantorovich-Rubinstein metric} and the \emph{L\'evy-Prohorov metric} (see \cite[Theorem 8.10.43]{bogachev}).
A very important property of the Wasserstein metric is that it metrizes the weak convergence of probability measures, which is due to the fact that the underlying metric is bounded. More precisely, a sequence $\{\mu_n(dy)\}_{n\in\N}\subseteq\mathcal{P}(S)$  converges  to $\mu\in\mathcal{P}(S)$ in the Wasserstein topology if, and only if,  $\{\mu_n(dy)\}_{n\in\N}$ converges to  $\mu(dy)$ weakly, that is, $$\lim_{n\to\infty}\int_Sf(y)\mu_n(dy)=\int_Sf(y)\mu(dy),\qquad f\in C_b(S),$$ (see \cite[Corollary 6.13]{Villani}).
For more
on the Wasserstein metric  we refer the readers to
\cite{bogachev} and \cite{Villani}.

We are now in position to state the main results of this paper.
\begin{theorem}\label{tm1.2} Let  $\textbf{M}$ be a Markov process with state space  $(S,\mathcal{S})$. Assume that there is $\pi\in\mathcal{P}(S)$ satisfying
\begin{align}
\label{eq1.6}
\lim_{t\to\infty}\sup_{s\in\mathbb{T}}\int_S d_W(p^{t}(y,dz),\pi(dz))p^{s}(x,dy)=0,\qquad x\in S.\end{align}
Then, for any $p\ge1$, $f\in Lip(S)$ and  $\mu\in\mathcal{P}(S),$ \begin{align}\label{eq1.7}\frac{1}{t}\int_{[0,t)}f(M_s)\tau(ds)\xrightarrow[t\nearrow\infty]{L^p(\Omega,\mathbb{P}^{\mu})} \int_Sf(y)\pi(dy),\end{align} where $\xrightarrow[t\nearrow\infty]{L^p(\Omega,\mathbb{P}^{\mu})}$ denotes the convergence in $L^p(\Omega,\mathbb{P}^{\mu})$.
\end{theorem}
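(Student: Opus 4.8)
The plan is to prove the stronger statement of $L^2$-convergence and then upgrade to every $p\ge1$ for free, using boundedness. First I would normalise: the claim is trivial for constant $f$, so I may assume $|f|_{Lip}=1$, and I set $\bar f:=\int_S f(y)\,\pi(dy)$ and $A_t:=\frac1t\int_{[0,t)}f(M_s)\,\tau(ds)$. Because $d\le1$, for every $x$ one has $|f(x)-\bar f|=|\int_S(f(x)-f(z))\pi(dz)|\le\int_S d(x,z)\pi(dz)\le1$, so $|A_t-\bar f|\le1$ pointwise. Consequently, once I show $\mathbb{E}^\mu[(A_t-\bar f)^2]\to0$, then for $1\le p\le2$ Jensen's inequality gives $\|A_t-\bar f\|_p\le\|A_t-\bar f\|_2\to0$, while for $p>2$ the bound $|A_t-\bar f|^p\le|A_t-\bar f|^2$ gives the same conclusion; hence $L^2$-convergence suffices for \eqref{eq1.7}.

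The heart of the matter is a covariance estimate. Using progressive measurability of $\mathbf M$ (so that $(s,\omega)\mapsto f(M_s(\omega))$ is jointly measurable) and Fubini, I would write
\[
\mathbb{E}^\mu[(A_t-\bar f)^2]=\frac{1}{t^2}\int_{[0,t)}\int_{[0,t)}C(s,u)\,\tau(ds)\,\tau(du),\qquad C(s,u):=\mathbb{E}^\mu[(f(M_s)-\bar f)(f(M_u)-\bar f)].
\]
For $s\le u$ I would condition on $\mathcal F_s$ and use the temporally homogeneous Markov property to get $C(s,u)=\mathbb{E}^\mu[(f(M_s)-\bar f)(P_{u-s}f(M_s)-\bar f)]$, where $P_rf(y):=\int_S f(z)\,p^r(y,dz)$. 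The crucial point is that, since $|f|_{Lip}=1$, the definition of the Wasserstein metric yields $|P_rf(y)-\bar f|=|\int_S f(z)(p^r(y,dz)-\pi(dz))|\le d_W(p^r(y,\cdot),\pi)$. Combined with $|f(M_s)-\bar f|\le1$ this gives, for $s\le u$,
\[
|C(s,u)|\le\mathbb{E}^\mu[d_W(p^{u-s}(M_s,\cdot),\pi)]=\int_S\int_S d_W(p^{u-s}(y,\cdot),\pi)\,p^s(x,dy)\,\mu(dx),
\]
and the general case follows from $C(s,u)=C(u,s)$.

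To exploit hypothesis \eqref{eq1.6}, I would bound the inner integral by $g_r(x):=\sup_{s\in\mathbb{T}}\int_S d_W(p^r(y,\cdot),\pi)p^s(x,dy)$, so that $|C(s,u)|\le G(|u-s|)$ with $G(r):=\int_S g_r(x)\,\mu(dx)$. By \eqref{eq1.6}, $g_r(x)\to0$ for every $x$ as $r\to\infty$, and since $d_W\le1$ we have $0\le g_r\le1$; dominated convergence then gives $G(r)\to0$, and clearly $G\le1$. The key role of the supremum over $s$ in \eqref{eq1.6} is exactly this: the covariance at times $(s,u)$ begins its relaxation towards $\pi$ only after the process has already run for the arbitrarily large elapsed time $s\wedge u$, so the required tail bound must be uniform in that elapsed time.

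Finally I would run a Cesàro argument. Splitting the square $[0,t)^2$ into the diagonal band $\{|u-s|\le T\}$, where I bound $|C|\le1$, and its complement, where $|C|\le\sup_{r>T}G(r)$, yields
\[
\mathbb{E}^\mu[(A_t-\bar f)^2]\le\frac{2T}{t}+\sup_{r>T}G(r).
\]
Letting $t\to\infty$ and then $T\to\infty$ forces the left-hand side to $0$, which completes the proof. I expect the main obstacle to be the covariance bound of the third paragraph, and in particular the correct matching of the two time scales so that \eqref{eq1.6} can be applied with its supremum absorbing the elapsed time $s\wedge u$. The only genuinely technical side point is the measurability of $g_r$ in the continuous-time case, where the supremum runs over an uncountable index set; this is immediate in discrete time (a countable supremum of measurable functions) and in continuous time is handled through the standing progressive-measurability assumption by reducing the relevant suprema to a countable set.
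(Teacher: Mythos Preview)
Your proposal is correct and follows essentially the same route as the paper: reduce to $L^2$ via boundedness, expand the squared deviation as a covariance double integral, apply the Markov property to factor the covariance through $P_{u-s}f$, bound $|P_r f-\bar f|\le d_W(p^r(\cdot),\pi)$ via the Lipschitz condition, and then invoke the supremum in \eqref{eq1.6} to control the result uniformly in the elapsed time $s\wedge u$. The only cosmetic differences are that the paper first reduces to Dirac initial distributions (which sidesteps the measurability issue you flag for $g_r$) and rescales the time average to an integral over $[0,1]$, so that dominated convergence on $[0,1]^2$ replaces your explicit Ces\`aro diagonal-band splitting.
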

Let us remark that
\begin{itemize}
	\item [(i)] the function $x\longmapsto d_W(p^{t}(x,dy),\pi(dy))$ is $\mathcal{S}/\mathcal{B}(\R)$ measurable for all $t\in\mathbb{T}$ (see the proof of \cite[Lemma 4.5]{Butkovsky} or  \cite[Theorem 4.8]{Hairer-Mattingly-Scheutzow}), hence the relation in \eqref{eq1.6} is well defined.
	\item[(ii)]  the relation in \eqref{eq1.6} is always bounded by $1$.
	\item[(iii)] for any $t\in\mathbb{T}$ and $x\in S$,  $$d_W(p^{t}(x,dy),\pi(dy))\leq\sup_{s\in\mathbb{T}}\int_S d_W(p^{t}(y,dz),\pi(dz))p^{s}(x,dy),$$ thus \eqref{eq1.6} implies \begin{align}\label{eq1.8} \lim_{t\longrightarrow\infty}d_W(p^{t}(x,dy),\pi(dy))=0,\qquad x\in S.\end{align}
	\item[(iv)] \eqref{eq1.6} trivially holds true if  $$\lim_{t\longrightarrow\infty}\sup_{x\in S}d_W(p^{t}(x,dy),\pi(dy))=0$$ (see \cite[Theorems 2.1 and 2.4]{Butkovsky} and \cite[Theorem 5.22]{Chen} for sufficient conditions  ensuring the above relation).
	\item[(v)] we do not require that $\mathbf{M}$ is irreducible  (see Example \ref{e3.1}).
\end{itemize}
As a direct consequence of  \cite[Lemma 3.2 and Proposition 3.12]{Kallenberg} we also conclude the following.
\begin{corollary}\label{c1.3} Let  $\textbf{M}$ be a Markov process with state space  $(S,\mathcal{S})$. Assume that $\textbf{M}$ satisfies the assumptions from Theorem \ref{tm1.2}.
Then,
\begin{itemize}
	\item [(i)]for any  $f\in Lip(S)$ and   $\mu\in\mathcal{P}(S),$ the convergence in \eqref{eq1.7} holds also in probability with respect to $\mathbb{P}^{\mu}(d\omega)$.
	\item[(ii)] for any  $f\in Lip(S)$, $\mu\in\mathcal{P}(S)$ and sequence $\{t_n\}_{n\in\N}\subseteq\mathbb{T}$, $t_n\nearrow\infty$, there is a further subsequence $\{t_{n_k}\}_{k\in\N}\subseteq\{t_{n}\}_{n\in\N}$ (possibly depending on $f(x)$ and $\mu(dy)$) such that  the convergence in \eqref{eq1.7} holds also  $\mathbb{P}^{\mu}$-a.s.
	\item[(iii)] $p^{t}(x,dy)$, $t\in\mathbb{T}$, is  \emph{weak  * mean ergodic}, that is,  $$\frac{1}{t}\int_{[0,t)}\int_{S}p^{t}(x,dy)\mu(dx)$$ converges weakly to $\pi(dy)$ as $t\nearrow\infty$ for every $\mu\in\mathcal{P}(S)$.
\end{itemize}
\end{corollary}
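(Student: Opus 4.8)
The three assertions all follow from the $L^{p}$ convergence established in Theorem \ref{tm1.2}, specialized to $p=1$, together with the elementary relations between the various modes of stochastic convergence. Throughout I would fix $f\in Lip(S)$ and $\mu\in\mathcal{P}(S)$, and abbreviate $A_t:=\frac1t\int_{[0,t)}f(M_s)\,\tau(ds)$ and $c:=\int_Sf(y)\,\pi(dy)$, so that Theorem \ref{tm1.2} reads $A_t\to c$ in $L^{p}(\Omega,\mathbb{P}^{\mu})$ for every $p\ge1$. For part (i) the plan is simply to invoke Markov's inequality: for every $\varepsilon>0$,
\begin{align*}
\mathbb{P}^{\mu}\bigl(|A_t-c|>\varepsilon\bigr)\le\varepsilon^{-1}\,\mathbb{E}^{\mu}\bigl[\,|A_t-c|\,\bigr]\xrightarrow[t\nearrow\infty]{}0,
\end{align*}
which is precisely the passage from $L^{1}$ convergence to convergence in probability recorded in \cite[Lemma 3.2]{Kallenberg}.

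For part (ii) I would use the standard characterization that a family converging in probability admits, along any prescribed sequence of indices, a further subsequence along which it converges almost surely (\cite[Proposition 3.12]{Kallenberg}). Concretely, given $t_n\nearrow\infty$, part (i) guarantees $A_{t_n}\to c$ in $\mathbb{P}^{\mu}$-probability, and extracting a $\mathbb{P}^{\mu}$-a.s.\ convergent subsequence $\{t_{n_k}\}_{k\in\N}$ (whose choice may depend on $f$ and $\mu$, since the underlying random variables do) yields the claim.

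For part (iii) I would pass to expectations. Since $f$ is bounded, Fubini's theorem gives
\begin{align*}
\mathbb{E}^{\mu}[A_t]=\frac1t\int_{[0,t)}\mathbb{E}^{\mu}[f(M_s)]\,\tau(ds)=\int_Sf(y)\,\nu_t(dy),\qquad \nu_t(dy):=\frac1t\int_{[0,t)}\Bigl(\int_Sp^{s}(x,dy)\,\mu(dx)\Bigr)\tau(ds),
\end{align*}
where $\nu_t$ is the time-averaged marginal law appearing in the statement. The $L^{1}$ convergence $A_t\to c$ forces $|\mathbb{E}^{\mu}[A_t]-c|\le\mathbb{E}^{\mu}[\,|A_t-c|\,]\to0$, so that $\int_Sf(y)\,\nu_t(dy)\to\int_Sf(y)\,\pi(dy)$ for every $f\in Lip(S)$. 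Since $Lip(S)\subseteq C_b(S)$ is a convergence-determining class on the Polish space $(S,d)$, this is exactly the statement that $\nu_t$ converges weakly to $\pi$.

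The arguments are routine, and the only point requiring a little care is part (iii): testing against the single class $Lip(S)$ already suffices to conclude weak convergence, and no uniformity in $f$ is needed, because convergence is checked one function at a time against a convergence-determining family (and not by controlling $d_W(\nu_t,\pi)=\sup_{|f|_{Lip}\le1}|\cdots|$ directly). If one prefers to sidestep this remark, the convexity of the Kantorovich--Rubinstein dual yields $d_W(\nu_t,\pi)\le\frac1t\int_{[0,t)}\int_Sd_W(p^{s}(x,dz),\pi(dz))\,\mu(dx)\,\tau(ds)$, and then \eqref{eq1.8}, dominated convergence (the integrand being bounded by $1$) and Cesàro averaging give $d_W(\nu_t,\pi)\to0$ directly.
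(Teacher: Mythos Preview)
Your proposal is correct and matches the paper's approach: the paper does not give a detailed argument but simply states that the corollary is ``a direct consequence of \cite[Lemma 3.2 and Proposition 3.12]{Kallenberg},'' which are precisely the results you invoke for (i) and (ii). For (iii) you supply more detail than the paper (which leaves it implicit), and both of your arguments---testing against $Lip(S)$ as a convergence-determining class, or bounding $d_W(\nu_t,\pi)$ directly via convexity and \eqref{eq1.8}---are valid and in the spirit of the surrounding text.
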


Further, denote by  $C_c(S)$, $C_\infty(S)$ and $C_{b,u}(S)$ the spaces of all $\R$-valued   continuous functions with compact support,  vanishing at infinity and uniformly continuous bounded functions on $S$, respectively.  Then, according to the Stone-Weierstrass theorem (which implies that $Lip(S)\cap C_c(S)$ is dense in $C_c(S)$ with respect to $|\cdot|_\infty$) and \cite[Theorem 1 and Proposition 6]{rumunj} (which state that $Lip(S)$ is dense in $C_{b,u}(S)$ with respect $|\cdot|_\infty$) we have the following.
\begin{corollary}\label{c1.4} Let  $\textbf{M}$ be a Markov process with state space  $(S,\mathcal{S})$. Assume that $\textbf{M}$ satisfies the assumptions from Theorem \ref{tm1.2}.
	Then,
	\begin{itemize}
		\item [(i)]  the relation in \eqref{eq1.7} holds for all $f\in C_c(S)$ and    $\mu\in\mathcal{P}(S).$ In particular, if $S$ is compact, then the relation in \eqref{eq1.7} holds for all $f\in C(S)$ and    $\mu\in\mathcal{P}(S).$
		\item[(ii)] provided $(S,d)$ is also a locally compact space, the relation in \eqref{eq1.7} holds for all $f\in C_\infty(S)$ and    $\mu\in\mathcal{P}(S).$
		\item [(iii)] provided $S$ is also a Hilbert space, the relation in \eqref{eq1.7} holds for all  $f\in C_{b,u}(S)$ and    $\mu\in\mathcal{P}(S).$
	\end{itemize}
\end{corollary}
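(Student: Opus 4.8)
The plan is to derive all three assertions from a single uniform-approximation argument that transfers the $L^p$-convergence \eqref{eq1.7} from $Lip(S)$ to the larger function class in question, using the density statements recorded just before the corollary. The structural fact making this possible is that $d$ has been normalised to be bounded by $1$, whence $Lip(S)\subseteq C_b(S)$; every Lipschitz approximant is therefore automatically admissible in Theorem \ref{tm1.2}.

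First I would isolate the following transfer principle. Fix $p\ge1$ and $\mu\in\mathcal{P}(S)$, and for $g\in B_b(S)$ write $A_t(g):=\frac1t\int_{[0,t)}g(M_s)\tau(ds)$. Two contraction estimates drive the argument: since $\tau([0,t))/t=1$ in continuous time and $\tau([0,t))/t\le2$ for $t\ge1$ in discrete time, one has the $\mathbb{P}^\mu$-a.s. bound $|A_t(g)|\le 2\,|g|_\infty$ for all $t\ge1$ and $g\in B_b(S)$, hence $\|A_t(g)\|_{L^p(\Omega,\mathbb{P}^\mu)}\le 2\,|g|_\infty$, while $\bigl|\int_S g\,d\pi\bigr|\le|g|_\infty$ because $\pi\in\mathcal{P}(S)$. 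Now suppose $\{f_n\}_{n\in\N}\subseteq Lip(S)$ satisfies $|f_n-f|_\infty\to0$ for some $f\in C_b(S)$. Writing
\begin{align*}
\Bigl\|A_t(f)-\int_S f\,d\pi\Bigr\|_{L^p(\Omega,\mathbb{P}^\mu)}
&\le\|A_t(f-f_n)\|_{L^p(\Omega,\mathbb{P}^\mu)}+\Bigl|\int_S(f_n-f)\,d\pi\Bigr|\\
&\quad+\Bigl\|A_t(f_n)-\int_S f_n\,d\pi\Bigr\|_{L^p(\Omega,\mathbb{P}^\mu)}
\end{align*}
and applying the contraction bounds to the first two terms yields, for every $t\ge1$ and $n\in\N$,
$$\Bigl\|A_t(f)-\int_S f\,d\pi\Bigr\|_{L^p(\Omega,\mathbb{P}^\mu)}\le 3\,|f_n-f|_\infty+\Bigl\|A_t(f_n)-\int_S f_n\,d\pi\Bigr\|_{L^p(\Omega,\mathbb{P}^\mu)}.$$
Given $\varepsilon>0$ I would first fix $n$ with $3\,|f_n-f|_\infty<\varepsilon/2$ and then, since $f_n\in Lip(S)$, use Theorem \ref{tm1.2} to pick $T$ with the remaining term $<\varepsilon/2$ for all $t>T$; this establishes \eqref{eq1.7} for $f$.

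It then remains only to produce, in each case, Lipschitz approximants converging uniformly to the target, which the cited density results supply. For (i) the density of $Lip(S)\cap C_c(S)$ in $(C_c(S),|\cdot|_\infty)$ gives such a sequence for any $f\in C_c(S)$ (and $C_c(S)=C(S)$ when $S$ is compact). For (ii), in the locally compact case $C_c(S)$ is $|\cdot|_\infty$-dense in $C_\infty(S)$, so chaining this with the density from (i) produces Lipschitz approximants for any $f\in C_\infty(S)$. For (iii), when $S$ is a Hilbert space, \cite[Theorem 1 and Proposition 6]{rumunj} gives that $Lip(S)$ is $|\cdot|_\infty$-dense in $C_{b,u}(S)$, supplying the sequence for any $f\in C_{b,u}(S)$. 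In each case the approximants lie in $Lip(S)\subseteq C_b(S)$, so the transfer principle applies as stated.

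There is no genuinely difficult step: the entire content is that $A_t$ and integration against $\pi$ are bounded for $|\cdot|_\infty$, so uniform approximation survives both the time-averaging and the passage to the limit. The only points needing care are the bookkeeping that the approximants remain bounded and Lipschitz (automatic since $d$ is bounded) and, in (ii), the chaining of two density statements rather than a single one.
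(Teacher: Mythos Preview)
Your argument is correct and is precisely the approach the paper intends: the paper records only the density facts (Stone--Weierstrass for $Lip(S)\cap C_c(S)$ in $C_c(S)$, and \cite{rumunj} for $Lip(S)$ in $C_{b,u}(S)$) and leaves the uniform-approximation transfer implicit, which you have spelled out. One cosmetic point: since in discrete time $t\in\{1,2,\dots\}$ gives $\tau([0,t))=t$, your factor $2$ is harmless overkill and could be replaced by $1$ throughout.
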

 Directly from the above corollary we conclude the ergodic $L^p$-version of the Birkhoff ergodic theorem (relation in \eqref{eq1.2}).
 \begin{corollary}\label{c1.7} Let  $\textbf{M}$ be a Markov process with state space  $(S,\mathcal{S})$. In addition to the  assumptions from Theorem \ref{tm1.2}, assume that $\pi(dy)$ is an invariant measure for $\textbf{M}$ and that $(S,d)$ is also a locally compact space. Then,  
 $$\frac{1}{t}\int_{[0,t)}f(M_s)\tau(ds)\xrightarrow[t\nearrow\infty]{L^p(\Omega,\mathbb{P}^{\pi})} \int_Sf(y)\pi(dy)$$ holds for any  $p\ge1$ and $f\in L^p(S,\pi)$.
 \end{corollary}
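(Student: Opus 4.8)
The plan is to reduce the general case $f\in L^p(S,\pi)$ to the case $f\in C_\infty(S)$, which is already settled by Corollary \ref{c1.4}(ii) (applicable precisely because $(S,d)$ is assumed locally compact), by means of a density argument in $L^p(S,\pi)$. The two ingredients I would need are a uniform-in-$t$ bound showing that the time-average map is a contraction from $L^p(S,\pi)$ into $L^p(\Omega,\mathbb{P}^\pi)$, and the density of $C_\infty(S)$ in $L^p(S,\pi)$.

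First I would record the consequence of invariance that makes starting from $\pi$ convenient: since $\pi$ is invariant, $\int_S p^s(x,dy)\pi(dx)=\pi(dy)$ for every $s\in\mathbb{T}$, so under $\mathbb{P}^\pi$ each marginal $M_s$ has law $\pi$. Hence for any $g\in L^p(S,\pi)$ and any $s$,
\[
\mathbb{E}^\pi\bigl[|g(M_s)|^p\bigr]=\int_S|g(y)|^p\pi(dy)=\|g\|_{L^p(S,\pi)}^p .
\]
Writing $A_t g:=\frac1t\int_{[0,t)}g(M_s)\tau(ds)$ and combining Minkowski's integral inequality with the identity above, I would obtain the key bound
\[
\|A_t g\|_{L^p(\Omega,\mathbb{P}^\pi)}\le\frac1t\int_{[0,t)}\|g(M_s)\|_{L^p(\Omega,\mathbb{P}^\pi)}\,\tau(ds)=\|g\|_{L^p(S,\pi)},
\]
valid for all $t$ and all $g\in L^p(S,\pi)$. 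In the continuous-time case the progressive measurability assumed for $\mathbf{M}$ makes $(s,\omega)\mapsto g(M_s(\omega))$ jointly measurable, legitimizing the integral and Minkowski's inequality; in discrete time this is a finite sum. Jensen's inequality (for $p\ge1$, $\pi$ a probability measure) similarly gives $|\int_S g\,d\pi|\le\|g\|_{L^p(S,\pi)}$, so the constant limit is controlled by the same norm.

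Next I would invoke density. Because $(S,d)$ is a locally compact Polish space and $\pi$ is a Borel probability measure on it, $\pi$ is Radon, and therefore $C_c(S)\subseteq C_\infty(S)$ is dense in $L^p(S,\pi)$. Given $f\in L^p(S,\pi)$ and $\varepsilon>0$, fix $g\in C_\infty(S)$ with $\|f-g\|_{L^p(S,\pi)}<\varepsilon$, and split
\[
\Bigl\|A_t f-\int_S f\,d\pi\Bigr\|_{L^p(\mathbb{P}^\pi)}\le\|A_t(f-g)\|_{L^p(\mathbb{P}^\pi)}+\Bigl\|A_t g-\int_S g\,d\pi\Bigr\|_{L^p(\mathbb{P}^\pi)}+\Bigl|\int_S(g-f)\,d\pi\Bigr| .
\]
By the bounds of the previous step the first and third terms are each $<\varepsilon$ uniformly in $t$, while the middle term tends to $0$ as $t\nearrow\infty$ by Corollary \ref{c1.4}(ii) applied with $\mu=\pi$ and $g\in C_\infty(S)$. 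Letting $t\nearrow\infty$ and then $\varepsilon\downarrow0$ yields the claim.

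I expect the only genuinely load-bearing step to be the contraction bound, since it is exactly there that the invariance of $\pi$ (equivalently, the stationarity of the marginals under $\mathbb{P}^\pi$) is used to convert the $L^p(\Omega,\mathbb{P}^\pi)$ norm of $g(M_s)$ into the $L^p(S,\pi)$ norm of $g$; this is what makes the averaging operator $L^p$-bounded uniformly in $t$ and lets the density argument close. The density of $C_\infty(S)$ in $L^p(S,\pi)$ is standard once local compactness is assumed, and the remaining estimates are routine applications of the triangle and Jensen inequalities.
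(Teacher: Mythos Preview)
Your proof is correct and follows essentially the same route as the paper's: approximate $f\in L^p(S,\pi)$ by compactly supported (or $C_\infty$) continuous functions, use invariance of $\pi$ to turn the $L^p(\Omega,\mathbb{P}^\pi)$ norm of the time average into the $L^p(S,\pi)$ norm of the approximation error (the paper does this with Jensen rather than Minkowski, which amounts to the same bound), and then invoke Corollary~\ref{c1.4} for the middle term. The only cosmetic difference is that the paper approximates by $C_c(S)$ and cites part~(i) of Corollary~\ref{c1.4}, whereas you use $C_\infty(S)$ and part~(ii).
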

Observe that the measure $\pi(dy)$ in the previous corollary is a unique invariant measure for $\textbf{M}$. Thus, $\textbf{M}$ is necessarily ergodic.
Further, let us remark here that if $\textbf{M}$ is a strong Feller process, that is, if $x\longmapsto\int_Sf(y)p^{t}(x,dy)$ is a $C_b(S)$ function for every $t\in\mathbb{T}\setminus\{0\}$ and $f\in B_b(S),$ then, under \eqref{eq1.8}, (a) is easily concluded from
the second line in \eqref{eq1.5} and the fact that the Wasserstein metric  metrizes the weak convergence of probability measures. In particular, as we have already commented, (under aperiodicity assumption) this implies   strong ergodicity of $\textbf{M}$. For sufficient conditions which ensure the strong Feller property of Markov processes
see
\cite{Schilling-Wang}. Further, note that, similarly as in the case of the total variation metric, the condition in \eqref{eq1.8}, together with  \cite[Theorem 1.2]{Billingsley},  implies that $\pi(dy)$ is the only measure satisfying \eqref{eq1.8} (and \eqref{eq1.6}). On the other hand, it is not completely clear that \eqref{eq1.8} (or \eqref{eq1.6}) automatically ensures invariance (with respect to $\textbf{M}$) of $\pi(dy)$. However, if, in addition, we assume that   $\textbf{M}$ is a $C_b$-Feller process, that is, if $x\longmapsto\int_Sf(y)p^{t}(x,dy)$ is a $C_b(S)$ function for every $t\in\mathbb{T}$ and $f\in C_b(S),$ then \eqref{eq1.8} implies  invariance of $\pi(dy)$. Indeed, for any $t\in\mathbb{T}$, $x\in S$ and $f\in C_b(S),$ we have that
\begin{align*}\int_{S}f(y)\pi(dy)&=\lim_{s\to\infty}\int_Sf(y)p^{s+t}(x,dy)\\&=\lim_{s\to\infty}\int_{S}\int_{S}f(y)p^{t}(z,dy)p^{s}(x,dz)\\&=\int_Sf(y)\int_Sp^{t}(z,dy)\pi(dz).\end{align*} The assertion now follows from  \cite[Theorem 1.2]{Billingsley}. For conditions ensuring that a
Markov process is a
$C_b$-Feller process  see \cite{Schilling}.
Another condition ensuring that  \eqref{eq1.8} (or \eqref{eq1.6}) implies  invariance of $\pi(dy)$ is contractivity of $p^{t}(x,dy)$ with respect to $d_W$, that is, \begin{align} \label{eq1.9}
d_W(p^{t}(x_1,dy),p^{t}(x_2,dy))\leq d(x_1,x_2),\qquad t\in\mathbb{T},\ x_1,x_2\in S.
\end{align}  To see this, first, according to  the proof of \cite[Lemma 4.5]{Butkovsky}, the above relation yields that for any $t\in\mathbb{T}$ and $\mu, \nu\in\mathcal{P}(S)$, we have that \begin{align} \label{eq1.10}d_W\left(\int_Sp^{t}(x,dy)\mu(dx),\int_Sp^{t}(x,dy)\nu(dx)\right)\leq d_W(\mu(dy),\nu(dy)).\end{align}
In particular, by employing \eqref{eq1.8}, we have that $$\lim_{s\to\infty}d_W\left(\int_Sp^{t}(x,dy)\pi(dx),p^{s+t}(x,dy)\right)\leq \lim_{s\to\infty}d_W(\pi(dy),p^{s}(x,dy))=0,\qquad t\in\mathbb{T},\ x\in S,$$ which yields,
$$\int_Sf(y)\int_Sp^{t}(x,dy)\pi(dx)=\lim_{s\to\infty}\int_Sf(y)p^{s+t}(x,dy)=\int_{S}f(y)\pi(dy),\qquad t\in\mathbb{T},\ x\in S.$$ Recall that  every Markov semigroup is  contractive with respect to $d_{TV}$ (in the sense \eqref{eq1.10}). Finally, if $\pi(dy)$ is an invariant probability measure of $\textbf{M}$ satisfying \eqref{eq1.8}, then,   just by applying \cite[Theorem 1.2]{Billingsley}, we easily see that
$\pi(dy)$ is actually  a unique invariant probability measure for $\textbf{M}$. Consequently, $\textbf{M}$ is ergodic.

Based on the previous discussions, it is tempting to conclude that  \eqref{eq1.6} in Theorem \ref{tm1.2} might be replaced by \eqref{eq1.8} (as in the strong ergodicity situation).   However, it is not completely clear that \eqref{eq1.8} alone is sufficient to conclude the assertion of Theorem \ref{tm1.2}.
In the following theorem we prove that, in addition to \eqref{eq1.8},
 if the transition function $p^{t}(x,dy)$, $t\in\mathbb{T}$, is Lipschitz, that is, $x\longmapsto \int_{S}f(y)p^{t}(x,dy)$ is Lipschitz for every $f\in Lip(S)$ and $t\in\mathbb{T}$, then the assertion of Theorem \ref{tm1.2} holds true.
For more on Markov processes with Lipschitz transition function  we refer the readers to \cite{bass1}.
\begin{theorem} \label{tm1.5} Let  $\textbf{M}$ be a Markov process with state space  $(S,\mathcal{S})$. Assume that there is $\pi\in\mathcal{P}(S)$ satisfying \eqref{eq1.8}, and for every $f\in Lip(S)$  and $t\in\mathbb{T}$, the function $$F_{f,t}(x):=\int_Sf(y)p^{t}(x,dy),\qquad x\in S,$$ is also in $Lip(S)$ with $|F_{f,t}|_{Lip}\leq C_f,$ where the constant $C_f$ depends only on $f(x)$.
	Then, for any $f\in Lip(S)$ and  $\mu\in\mathcal{P}(S),$ $\textbf{M}$ satisfies the relation in \eqref{eq1.7}.
\end{theorem}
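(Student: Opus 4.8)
The plan is to show that the two hypotheses force condition \eqref{eq1.6}, so that the conclusion \eqref{eq1.7} follows directly from Theorem \ref{tm1.2}. Writing $g_t(y):=d_W(p^{t}(y,dz),\pi(dz))$, assumption \eqref{eq1.8} says precisely that $g_t(y)\to0$ for every $y\in S$, and $0\le g_t\le1$ since $d_W\le1$. Condition \eqref{eq1.6} asserts that this pointwise convergence survives integration against $p^{s}(x,dy)$ uniformly in $s\in\mathbb{T}$. I would obtain this upgrade from two ingredients: equicontinuity of the family $\{g_t\}_t$, and tightness of the family of measures $\{p^{s}(x,dy):s\in\mathbb{T}\}$; a routine splitting over a compact set then finishes the argument.

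For equicontinuity, the Lipschitz hypothesis makes $y\mapsto p^{t}(y,dz)$ Lipschitz from $(S,d)$ into $(\mathcal{P}(S),d_W)$ with a constant uniform in $t$. Indeed $d_W(p^{t}(y_1,dz),p^{t}(y_2,dz))=\sup_{|f|_{Lip}\le1}|F_{f,t}(y_1)-F_{f,t}(y_2)|\le C\,d(y_1,y_2)$, where $C:=\sup_{t}\sup_{|f|_{Lip}\le1}|F_{f,t}|_{Lip}$; the finiteness of $C$ follows from the uniform boundedness principle applied to the (lower semicontinuous, everywhere finite, hence continuous) seminorms $f\mapsto|F_{f,t}|_{Lip}$ on the Banach space $Lip(S)$ modulo constants, which are pointwise bounded by the $C_f$ of the hypothesis. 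Since $|g_t(y_1)-g_t(y_2)|\le d_W(p^{t}(y_1,dz),p^{t}(y_2,dz))\le C\,d(y_1,y_2)$, the family $\{g_t\}_t$ is uniformly $C$-Lipschitz, hence equicontinuous.

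For tightness, note that \eqref{eq1.8} is exactly the statement that $p^{s}(x,dy)\to\pi(dy)$ in $d_W$ (equivalently, weakly) as $s\to\infty$. In discrete time the family is then a convergent sequence together with its limit, hence relatively compact and so tight. In continuous time I would use the uniform Lipschitz bound once more to produce a modulus of right-continuity of $s\mapsto p^{s}(x,dy)$ that is independent of $s$: by the convexity of $d_W$, $d_W(p^{s+\delta}(x,dy),p^{s}(x,dy))\le\int_S d_W(p^{s}(y,dz),p^{s}(x,dz))\,p^{\delta}(x,dy)\le C\,\mathbb{E}^{x}[d(x,M_\delta)]$, and the right-hand side tends to $0$ as $\delta\downarrow0$ by right-continuity of the paths. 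This renders $\{p^{s}(x,dy):s\in[0,T]\}$ totally bounded in the complete space $(\mathcal{P}(S),d_W)$ for each $T$; together with the convergent tail this gives relative compactness of the whole family, hence tightness.

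Finally, given $\varepsilon>0$ I would pick a compact $K\subseteq S$ with $\sup_{s}p^{s}(x,K^{c})\le\varepsilon$. Equicontinuity together with the pointwise convergence $g_t\to0$ forces $g_t\to0$ uniformly on $K$, so $\sup_{y\in K}g_t(y)\le\varepsilon$ for $t$ large; then $\int_S g_t(y)\,p^{s}(x,dy)\le\sup_{K}g_t+p^{s}(x,K^{c})\le2\varepsilon$ for every $s$, which is precisely \eqref{eq1.6}, and Theorem \ref{tm1.2} yields \eqref{eq1.7}. The main obstacle is exactly this passage from the pointwise \eqref{eq1.8} to the $s$-uniform \eqref{eq1.6}; concretely it is the tightness of $\{p^{s}(x,dy):s\in\mathbb{T}\}$ in the continuous-time case, and the decisive observation is that the uniform-in-$t$ Lipschitz hypothesis is precisely what supplies the $s$-independent modulus of continuity needed to control the measures on bounded time intervals.
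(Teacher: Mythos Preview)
Your route --- verify \eqref{eq1.6} from the hypotheses and then invoke Theorem \ref{tm1.2} --- is genuinely different from the paper's, which does not try to establish \eqref{eq1.6} at all but instead reruns the $L^2$ computation with a different splitting. There is, however, a gap in your continuous-time tightness step: to get $\mathbb{E}^{x}[d(x,M_\delta)]\to0$ you appeal to right-continuity of paths, but the only standing assumption in continuous time is progressive measurability, and this does not imply stochastic continuity at $0$. Without that, $\{p^{s}(x,dy):s\in[0,T]\}$ need not be totally bounded, and your compact/complement decomposition breaks down. (The discrete-time case and the equicontinuity argument via Banach--Steinhaus are fine.)

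Interestingly, \eqref{eq1.6} \emph{can} be obtained from the given hypotheses without any tightness, using only the semigroup property and your uniform Lipschitz constant $C$. With $g_t(y):=d_W(p^{t}(y,\cdot),\pi)$, the bound $|P_sg_t(x)-\pi(g_t)|\le C\,g_s(x)$ handles $s\ge T$, while for $s<T$ Chapman--Kolmogorov and convexity of $d_W$ give $g_t\le P_{T-s}g_{t-T+s}$, hence $P_sg_t(x)\le P_Tg_{t-T+s}(x)\le C\,g_T(x)+\pi(g_{t-T+s})$; choosing $T$ and then $t$ large makes both pieces small. This repairs your argument. The paper takes a shorter path: in the $L^2$ estimate it inserts $\pi(fP_{(s-u)t}f)$ as an intermediate term, and since $fP_{(s-u)t}f$ is Lipschitz with a constant bounded uniformly in $s,u,t$ (by the hypothesis $|P_tf|_{Lip}\le C_f$), every term is controlled by $d_W(p^{ut}(x,\cdot),\pi)$ or by $\int d_W(p^{(s-u)t}(y,\cdot),\pi)\,\pi(dy)$, both of which vanish by \eqref{eq1.8} and dominated convergence. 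This avoids any uniformity over $s$ and hence any tightness argument.
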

As we have already commented, one feature that distinguishes the total variation metric  among other metrics (on $\mathcal{P}(S)$) is that, for any Markov transition function, one always has the contraction property in \eqref{eq1.10}. On the other hand, the Wasserstein metric  may not be contracting in general. It is therefore natural to  focus only on Wasserstein metrics that are contracting for $\textbf{M}$ (in the sense of relation \eqref{eq1.9}).
Also, let us remark that it has been observed in \cite{Hairer-Mattingly-Scheutzow} that \eqref{eq1.9} is  essential (but not sufficient) in obtaining the convergence to an unique invariant probability measure (see \cite{Hairer-Mattingly-Scheutzow} for detailed discussion on this property). As a direct consequence of the contraction property in  \eqref{eq1.9} we get that
\begin{itemize}
	\item [(i)] the function $t\longmapsto d_W(p^{t}(x,dy),\pi(dy))$ is non-increasing for all $x\in S.$
	\item[(ii)] for every $f\in Lip(S)$  and $t\in\mathbb{T}$,  $F_{f,t}\in Lip(S)$ and $|F_{f,t}|_{Lip}\leq|f|_{Lip}.$
\end{itemize}
Thus, as a direct consequence of Theorem \ref{tm1.5} we get the following.
\begin{theorem}\label{tm1.6} Let  $\textbf{M}$ be a Markov process with state space  $(S,\mathcal{S})$. Assume that there is $\pi\in\mathcal{P}(S)$ satisfying \eqref{eq1.8} and
	\eqref{eq1.9} (hence, as we have already commented, $\pi(dy)$ is necessarily a unique invariant measure  for $\textbf{M}$).
	Then, for any $f\in Lip(S)$ and  $\mu\in\mathcal{P}(S),$ $\textbf{M}$ satisfies the relation in \eqref{eq1.7}. 
\end{theorem}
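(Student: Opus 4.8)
The plan is to obtain Theorem~\ref{tm1.6} directly from Theorem~\ref{tm1.5}, by checking that the contractivity hypothesis \eqref{eq1.9} furnishes exactly the ingredient that Theorem~\ref{tm1.5} requires beyond \eqref{eq1.8}, namely the uniform Lipschitz bound on the averaged functions $F_{f,t}$. Since \eqref{eq1.8} is assumed outright, the entire task reduces to verifying property (ii) of the list preceding the theorem: that for every $f\in Lip(S)$ and every $t\in\mathbb{T}$ the map $F_{f,t}(x)=\int_Sf(y)p^{t}(x,dy)$ belongs to $Lip(S)$ with $|F_{f,t}|_{Lip}\leq C_f$ for some constant $C_f$ depending only on $f$. (The monotonicity statement (i) is a further consequence of \eqref{eq1.9}, but it plays no role in invoking Theorem~\ref{tm1.5}.)

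First I would dispose of the trivial case $|f|_{Lip}=0$, in which $f$ is constant and $F_{f,t}=f$, so nothing is to be proved. Assuming $|f|_{Lip}>0$, the key step is to rewrite an increment of $F_{f,t}$ in terms of the Wasserstein distance between the two transition laws. Fixing $x_1,x_2\in S$ and $t\in\mathbb{T}$, I would note that $g:=f/|f|_{Lip}$ satisfies $|g|_{Lip}\leq1$, so $g$ is admissible in the Kantorovich--Rubinshtein (dual) formulation of $d_W$; this yields
$$\left|F_{f,t}(x_1)-F_{f,t}(x_2)\right|=|f|_{Lip}\left|\int_Sg(y)p^{t}(x_1,dy)-\int_Sg(y)p^{t}(x_2,dy)\right|\leq|f|_{Lip}\,d_W(p^{t}(x_1,dy),p^{t}(x_2,dy)).$$
Applying \eqref{eq1.9} to the right-hand side gives $\left|F_{f,t}(x_1)-F_{f,t}(x_2)\right|\leq|f|_{Lip}\,d(x_1,x_2)$, which is precisely property (ii) with $C_f=|f|_{Lip}$; crucially, this constant is independent of $t$, as Theorem~\ref{tm1.5} demands. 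With \eqref{eq1.8} already in hand and this Lipschitz bound established, all hypotheses of Theorem~\ref{tm1.5} hold, and \eqref{eq1.7} follows for every $f\in Lip(S)$ and $\mu\in\mathcal{P}(S)$.

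I do not expect any serious obstacle: the argument is purely a matter of unwinding the definition of $d_W$ through its dual representation and invoking the contraction bound. The only points meriting care are the homogeneity normalization $g=f/|f|_{Lip}$ together with the separate treatment of the constant case, and the observation that the resulting Lipschitz constant $C_f$ depends on $f$ alone and not on $t$, since it is precisely this $t$-uniformity that is needed to invoke Theorem~\ref{tm1.5}.
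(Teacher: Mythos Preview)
Your proposal is correct and matches the paper's own argument: the paper states Theorem~\ref{tm1.6} as a direct consequence of Theorem~\ref{tm1.5}, after noting (immediately before the theorem) that \eqref{eq1.9} implies $F_{f,t}\in Lip(S)$ with $|F_{f,t}|_{Lip}\le|f|_{Lip}$. Your write-up simply makes explicit, via the dual formulation of $d_W$, why this Lipschitz bound holds, which the paper leaves implicit.
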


\section{Proof of Theorems \ref{tm1.2} and \ref{tm1.5} and Corollary  \ref{c1.7}}\label{s2}
In this section, we prove Theorems \ref{tm1.2} and \ref{tm1.5} and   Corollary  \ref{c1.7}. Before the proofs, we introduce some notation that will be used in the sequel. For $\mu\in\mathcal{P}(S)$ and  $f\in B_b(S)$, we  write $\mu(f)$ for $\int_Sf(y)\mu(dy).$ Also, with $\processt{P}$ is denoted the semigroup of $\textbf{M}$ on $B_b(S)$, that is, $P_tf(x):=\int_Sf(y)p^{t}(x,dy),$ $t\in\mathbb{T}$, $x\in S$, $f\in B_b(S).$

\begin{proof}[Proof of Theorem \ref{tm1.2}]
First, observe that it suffices  to prove the assertion only for the Dirac measures as the initial distributions of $\textbf{M}$. Also, since for each $p\ge1$ and $f\in Lip(S)$ (recall that $Lip(S)\subseteq C_b(S)$) the family $$\left\{\left|\frac{1}{t}\int_{[0,t)}f(M_s)\tau(ds)\right|^p:t\in\mathbb{T}\right\}$$ is uniformly integrable,  it suffices to consider the case when $p=2$ only (see \cite[Proposition 3.12]{Kallenberg}). 
For arbitrary $x\in S$ and $f\in Lip(S)$   we have that
\begin{align*}&
\mathbb{E}^{x}\left[\left(\frac{1}{t}\int_{[0,t)}f(M_s)\tau(ds)-\pi(f)\right)^{2}\right]\nonumber\\
&=\mathbb{E}^{x}\left[\left(\frac{1}{t}\int_{[0,t)}f(M_s)\tau(ds)\right)^{2}\right]-\frac{2\pi(f)}{t}\int_{[0,t)}P_sf(x)\tau(ds)+\pi(f)^{2}.\end{align*}
Assume first that $\mathbb{T}=[0,\infty).$ Thus,  
$$\mathbb{E}^{x}\left[\left(\frac{1}{t}\int_{[0,t)}f(M_s)\tau(ds)-\pi(f)\right)^{2}\right]=\mathbb{E}^{x}\left[\left(\int_{0}^{1}f(M_{st})ds\right)^{2}\right]-2\pi(f)\int_{0}^{1}P_{st}f(x)ds+\pi(f)^{2}.$$
In the sequel, without loss of generality, assume that $|f|_{Lip}\leq1$. Otherwise, divide $f(x)$ by $|f|_{Lip}$.
Next, note that
$$\lim_{t\to\infty}\int_{0}^{1}P_{st}f(x)ds=\pi(f).$$
Indeed, by assumption, $$\lim_{t\to\infty}d_W(p^{t}(x,dy),\pi(dy))\leq \lim_{t\to\infty}\sup_{s\in\mathbb{T}}P_s\, d_W(p^{t}(\cdot,dz),\pi(dz))(x)=0.$$
  In particular, $$\lim_{t\to\infty}|P_tf(x)-\pi(f)|=0.$$ Thus, the claim is a direct consequence of the dominated convergence theorem.
 Note  that  the above relation also
 holds  for all $x\in S$ and $f\in C_b(S)$
 (the Wasserstein metric metrizes the weak convergence of
probability measures).
Consequently, in order to prove the assertion,  it suffices to prove that
$$\lim_{t\to\infty}\mathbb{E}^{x}\left[\left(\int_{0}^{1}f(M_{st})ds\right)^{2}\right]=\pi(f)^{2}.$$
By Fubini's theorem and the Markov property
 we have that
\begin{align*}&\left|\mathbb{E}^{x}\left[\left(\int_{0}^{1}f(M_{st})ds\right)^{2}\right]-\pi(f)^{2}\right|\\&=\left|2\mathbb{E}^{x}\left[\int_{0}^{1}\int_{0}^{s}f(M_{st})f(M_{ut})duds\right]-\pi(f)^{2}\right|\\
&=\left|2\int_{0}^{1}\int_{0}^{s}P_{ut}(f\,\mathbb{E}^{\cdot}[f(M_{(s-u)t})])(x)duds-\pi(f)^{2}\right|\\
&\leq2\left|\int_{0}^{1}\int_{0}^{s}\left(P_{ut}(fP_{(s-u)t}f)(x)-P_{ut}(f\pi(f))(x)\right)duds\right|+ 2\left|\int_{0}^{1}\int_{0}^{s}\left(P_{ut}(f\pi(f))(x)-\pi(f)^{2}\right)duds\right|\\
&=2\left|\int_{0}^{1}\int_{0}^{s}P_{ut}\left(P_{(s-u)t}f-\pi(f)\right)f(x)duds\right|+ 2\left|\pi(f)\int_{0}^{1}\int_{0}^{s}(P_{ut}f(x)-\pi(f))duds\right|
\\
&\leq 2|f|_{\infty}\int_{0}^{1}\int_{0}^{s}P_{ut}|P_{(s-u)t}f-\pi(f)|(x)duds +2|f|_{\infty}\int_{0}^{1}\int_{0}^{s}|P_{ut}f(x)-\pi(f)|duds\\
&\leq 2|f|_{\infty}\int_{0}^{1}\int_{0}^{s}\sup_{v\in[0,\infty)}P_{v}d_W(p^{(s-u)t}(\cdot,dz),\pi(dz))(x)duds+2|f|_{\infty}\int_{0}^{1}\int_{0}^{s}|P_{ut}f(x)-\pi(f)|duds.
\end{align*}  Now, by letting $t\nearrow\infty$ the assertion follows. 

In the discrete-time case we have
$$\frac{1}{t}\sum_{n=0}^{t-1}f(M_n)=\frac{1}{t}\int_0^{t}f(M_{[s]})ds=\int_0^{1}f(M_{[st]})ds, \qquad t\in\{1,2,3,\ldots\},$$
where $[a]$ denotes the integer part of $a\in\R$.
Hence,  $$\mathbb{E}^{x}\left[\left(\frac{1}{t}\int_{[0,t)}f(M_s)\tau(ds)-\pi(f)\right)^{2}\right]=\mathbb{E}^{x}\left[\left(\int_{0}^{1}f(M_{[st]})ds\right)^{2}\right]-2\pi(f)\int_{0}^{1}P_{[st]}f(x)ds+\pi(f)^{2}.$$
 Now, the proof of the assertion proceeds analogously as in the continuous-time case.
\end{proof}

\begin{proof}[Proof of Corollary \ref{c1.7}] Let $f\in L^p(S,\pi)$, $p\ge1$. Then, by \cite[Proposition 7.9]{Folland}, for any $\varepsilon>0$ there is $f_\varepsilon\in C_c(S)$ such that $$\left(\int_{S}|f(x)-f_\varepsilon(x)|^{p}\pi(dx)\right)^{1/p}<\frac{\varepsilon}{2}.$$ Further,   by Fubini's theorem and Jensen's inequality, we have that
	\begin{align*}&\left(\mathbb{E}^\pi\left[\left|\frac{1}{t}\int_{[0,t)}f(M_s)\tau(ds)-\pi(f)\right|^{p}\right]\right)^{1/p}\\
	&\le\left(\mathbb{E}^\pi\left[\left|\frac{1}{t}\int_{[0,t)}f(M_s)\tau(ds)-\frac{1}{t}\int_{[0,t)}f_\varepsilon(M_s)\tau(ds)\right|^{p}\right]\right)^{1/p}\\ &\ \ \  +\left(\mathbb{E}^\pi\left[\left|\frac{1}{t}\int_{[0,t)}f_\varepsilon(M_s)\tau(ds)-\pi(f_\varepsilon)\right|^{p}\right]\right)^{1/p}+\left(\mathbb{E}^\pi\left[\left|\pi(f_\varepsilon)-\pi(f)\right|^{p}\right]\right)^{1/p}\end{align*}
	\begin{align*}
	&\le \left(\frac{1}{t}\int_{[0,t)}\mathbb{E}^\pi\left[|f(M_s)-f_\varepsilon(M_s)|^p\right]\tau(ds)\right)^{1/p} +\left(\mathbb{E}^\pi\left[\left|\frac{1}{t}\int_{[0,t)}f_\varepsilon(M_s)\tau(ds)-\pi(f_\varepsilon)\right|^{p}\right]\right)^{1/p}\\ &\ \ \ +\left(\int_S|f_\varepsilon(x)-f(x)|^p\pi(dx)\right)^{1/p}\\
	&<\left(\int_S|f(x)-f_\varepsilon(x)|^p\pi(dx)\right)^{1/p} +\left(\mathbb{E}^\mu\left[\left|\frac{1}{t}\int_{[0,t)}f_\varepsilon(M_s)\tau(ds)-\pi(f_\varepsilon)\right|^{p}\right]\right)^{1/p}+\frac{\varepsilon}{2}\\
	&<\varepsilon+\left(\mathbb{E}^\mu\left[\left|\frac{1}{t}\int_{[0,t)}f_\varepsilon(M_s)\tau(ds)-\pi(f_\varepsilon)\right|^{p}\right]\right)^{1/p}.
	\end{align*} Finally, by employing Corollary \ref{c1.4} (i) we conclude 
	 $$\lim_{t\to\infty}\left(\mathbb{E}^\mu\left[\left|\frac{1}{t}\int_{[0,t)}f(M_s)\tau(ds)-\pi(f)\right|^{p}\right]\right)^{1/p}<\varepsilon,$$ which proofs the desired result.
	\end{proof}

\begin{proof}[Proof of Theorem \ref{tm1.5}]
	We proceed similarly as in Theorem \ref{tm1.2}. We discuss only the continuous-time case. Again, the assertion will follow if we prove
 that
	$$\lim_{t\to\infty}\mathbb{E}^{x}\left[\left(\int_{0}^{1}f(M_{st})ds\right)^{2}\right]=\pi(f)^{2},\qquad x\in S.$$
	By assumption, $$\lim_{t\to\infty}d_W(p^{t}(x,dy),\pi(dy))=0\qquad\textrm{and}\qquad \lim_{t\to\infty}\int_S d_W(p^{t}(y,dz),\pi(dz))\pi(dy)=0.$$    Recall that $\sup_{t\in\mathbb{T},\,x\in S}d_W(p^{t}(x,dy),\pi(dy))\leq1$,  hence the later condition is a consequence of the dominated convergence theorem.
	In particular,   $$\lim_{t\to\infty}|P_tf(x)-\pi(f)|=0.$$   Again, similarly as before, by Fubini's theorem and the Markov property,
	we have \begin{align*}&\left|\mathbb{E}^{x}\left[\left(\int_{0}^{1}f(M_{st})ds\right)^{2}\right]-\pi(f)^{2}\right|\\&\leq2\left|\int_{0}^{1}\int_{0}^{s}P_{ut}\left(P_{(s-u)t}f-\pi(f)\right)f(x)duds\right| + 2\left|\pi(f)\int_{0}^{1}\int_{0}^{s}(P_{ut}f(x)-\pi(f))duds\right|\\
&\le2\left|\int_{0}^{1}\int_{0}^{s}P_{ut}\left(P_{(s-u)t}f-\pi(f)\right)f(x)duds\right|+2|f|_\infty\int_{0}^{1}\int_{0}^{s}|P_{ut}f(x)-\pi(f)|duds\\ &\leq2|f|_\infty\int_{0}^{1}\int_{0}^{s}|P_{ut}f(x)-\pi(f)|duds+2\left|\int_{0}^{1}\int_{0}^{s}\left(P_{ut}\left(fP_{(s-u)t}f\right)(x)-\pi(fP_{(s-u)t}f)\right)duds\right|\\&\ \ \ 
 +2\left|\int_{0}^{1}\int_{0}^{s}\left(\pi\left(fP_{(s-u)t}f\right)-\pi(f)P_{ut}f(x)\right)duds\right|.\end{align*}
 Now, since
 $Lip(S)\subseteq C_b(S)$, $P_tf\in Lip(S)$ (with $|P_tf|_{Lip}\leq C_f$) for every $f\in Lip(S)$ and $t\in\mathbb{T}$, and for $f,g\in Lip(S)$, $fg\in Lip(s)$ (with $|fg|_{Lip}\leq|f|_\infty|f|_{Lip}+|g|_\infty|g|_{Lip}$), we conclude that
  $$\lim_{t\to\infty}\left|\mathbb{E}^{x}\left[\left(\int_{0}^{1}f(M_{st})ds\right)^{2}\right]-\pi(f)^{2}\right|\le2\lim_{t\to\infty}\left|\int_{0}^{1}\int_{0}^{s}\left(\pi\left(fP_{(s-u)t}f\right)-\pi(f)P_{ut}f(x)\right)duds\right|.$$
 Finally, we have
	\begin{align*}
	&2\left|\int_{0}^{1}\int_{0}^{s}\left(\pi\left(fP_{(s-u)t}f\right)-\pi(f)P_{ut}f(x)\right)duds\right|\\& \leq2\left|\int_{0}^{1}\int_{0}^{s}\left(\pi\left(fP_{(s-u)t}f\right)-\pi(f)^{2}\right)duds\right| +2\left|\int_{0}^{1}\int_{0}^{s}\left(\pi(f)^{2}-\pi(f)P_{ut}f(x)\right)duds\right|\\ &\leq2\left|\int_{0}^{1}\int_{0}^{s}\int_S\left(P_{(s-u)t}f(y)-\pi(f)\right)f(y)\pi(dy)duds\right| +
2|f|_{\infty}\int_{0}^{1}\int_{0}^{s}\left|\pi(f)-P_{ut}f(x)\right|duds\\
&=2|f|_\infty\int_{0}^{1}\int_{0}^{s}\int_S\left|P_{(s-u)t}f(y)-\pi(f)\right|\pi(dy)duds+2|f|_{\infty}\int_{0}^{1}\int_{0}^{s}\left|\pi(f)-P_{ut}f(x)\right|duds,
\end{align*} which concludes the proof.
\end{proof}

\section{Examples}\label{s3}
In this section, we give some applications of Theorems \ref{tm1.2},  \ref{tm1.5} and  \ref{tm1.6}.

\begin{example}\label{e3.1}{\rm Let $\{X_n\}_{n\geq1}$ be a sequence of i.i.d. $\R$-valued random variables on a probability space $(\Omega,\mathcal{F},\mathbb{P})$, satisfying $\mathbb{P}(X_n=0)=\mathbb{P}(X_n=1/2)=1/2$. Define $$M_{n+1}:=\frac{1}{2}M_n+X_{n+1},\qquad n\geq0,\ M_0\in[0,1].$$
Clearly, $\mathbf{M}:=\chain{M}$ is a Markov process with state space $([0,1],\mathcal{B}([0,1]))$ and transition function $p(x,dy):=\mathbb{P}(X_1+x/2\in dy),$ $x\in[0,1]$. Also, it is easy to see that the Lebesgue measure (on $\mathcal{B}([0,1])$) is invariant for $\mathbf{M}$ and $\mathbf{M}$ is ergodic with respect to ${\rm Leb}(dy)$.
In particular, the relation in \eqref{eq1.2} holds true. However, observe that, since ${\rm Leb}(dy)$ is singular with respect to $p(x,dy)$,  $\mathbf{M}$ is not strongly ergodic.
Let $d(x,y):=|x-y|,$ $x,y\in[0,1].$ Now, by a straightforward computation, we get $$d_W(p(x_1,dy),p(x_2,dy))\leq\frac{d(x_1,x_2)}{2},\qquad x_1,x_2\in[0,1],$$ which, together with the proof of \cite[Lemma 4.5]{Butkovsky}, yields that for all $n\geq1$ and  $x,x_1,x_2\in[0,1]$, $$d_W(p^{n}(x_1,dy),p^{n}(x_2,dy))\leq\frac{d(x_1,x_2)}{2^{n}}\qquad\textrm{and}\qquad d_W(p^{n}(x,dy),{\rm Leb}(dy))\leq\frac{1}{2^{n}}.$$
Thus, Theorem \ref{tm1.2} (and  Theorems   \ref{tm1.5} and \ref{tm1.6}) applies. Also, note that $\mathbf{M}$ is not irreducible.}
\end{example}

\begin{example}\label{e3.2}{\rm (\cite[Section 3]{Hairer-Mattingly-Scheutzow} and \cite[Example 3.4]{Butkovsky}).
 Let $\mathcal{C}:=C([-1,0],\R)$ be the space of all continuous functions from $[-1,0]$ to $\R$ endowed with the supremum norm. For $t\geq0$ and a function $f(s)$ defined on $[t-1,t]$, we write $f^{t}(s):=f(s+t)$, $s\in[-1,0]$.
 Consider the following stochastic functional
differential equation (the so-called \emph{stochastic delay equation})
\begin{align}\label{eq3.1}dM_t=-M_tdt+G(M_{t-1})dB_t,\qquad  t\geq0,\ M^{0}\in \mathcal{C},\end{align} where $G:\R\longrightarrow\R$ and $\process{B}$ is a standard one-dimensional Brownian motion. Now, by assuming that
 $G(u)$ is bounded, Lipschitz continuous, strictly positive and strictly increasing, in \cite[Theorems 3.1 and 4.8]{Hairer-Mattingly-Scheutzow}  it has been proven that the equation in \eqref{eq3.1} admits a unique strong solution $\mathbf{M}:=\{M^{t}\}_{t\geq0}$ which is a strong Markov and $C_b$-Feller  process with state space $(\mathcal{C},\mathcal{B}(\mathcal{C}))$.
Furthermore, due to \cite[Corollary 4.11]{Hairer-Mattingly-Scheutzow}, $\mathbf{M}$
    possesses a unique invariant probability measure $\pi(dy)$ which is singular with respect to the transition function $p^{t}(x,dy)$ of $\mathbf{M}$. Hence, $\mathbf{M}$ is not strongly ergodic.  Next, fix $\delta>0$ and define $d(x,y):=1\wedge |x-y|_\infty/\delta$, $x,y\in\mathcal{C}$.
    Then, according to \cite[Theorem 4.8]{Hairer-Mattingly-Scheutzow} (see also \cite[Example 3.2]{Butkovsky}), for suitable (small enough) constant $\delta>0$ there exist a constant $c>0$ and Borel function $C:\mathcal{C}\longrightarrow[0,\infty)$,  such that \eqref{eq1.9} holds, $$d_W(p^{t}(x,dy),\pi(dy))\leq C(x)e^{-ct}\qquad\textrm{and}\qquad \sup_{s\geq 0}P_sC(x)<\infty,\quad t\geq0,\ x\in\mathcal{C},$$ hence we are in the situation of Theorem \ref{tm1.2} (and Theorems    \ref{tm1.5} and \ref{tm1.6}).
}\end{example}

 \section*{Acknowledgement} This work has been supported in part by the Croatian Science Foundation under Project 3526 and  NEWFELPRO Programme  under Project 31. The author  thanks the anonymous reviewer
for careful reading of the paper and for helpful comments that led to improvement in the
presentation.

\bibliographystyle{plain}
\bibliography{References}

\begin{thebibliography}{10}

\bibitem{bass1}
R.~F. Bass.
\newblock Markov processes with {L}ipschitz semigroups.
\newblock {\em Trans. Amer. Math. Soc.}, 267(1):307--320, 1981.

\bibitem{Bhattacharya}
R.~N. Bhattacharya.
\newblock On the functional central limit theorem and the law of the iterated
  logarithm for {M}arkov processes.
\newblock {\em Z. Wahrsch. Verw. Gebiete}, 60(2):185--201, 1982.

\bibitem{Billingsley}
P.~Billingsley.
\newblock {\em Convergence of probability measures}.
\newblock John Wiley \& Sons, Inc., New York, second edition, 1999.

\bibitem{getoor}
R.~M. Blumenthal and R.~K. Getoor.
\newblock {\em Markov processes and potential theory}.
\newblock Academic Press, New York-London, 1968.

\bibitem{bogachev}
V.~I. Bogachev.
\newblock {\em Measure theory. {V}ol. {II}}.
\newblock Springer-Verlag, Berlin, 2007.

\bibitem{Butkovsky}
O.~Butkovsky.
\newblock Subgeometric rates of convergence of {M}arkov processes in the
  {W}asserstein metric.
\newblock {\em Ann. Appl. Probab.}, 24(2):526--552, 2014.

\bibitem{Chen}
M.-F. Chen.
\newblock {\em From {M}arkov chains to non-equilibrium particle systems}.
\newblock World Scientific Publishing Co., Inc., River Edge, NJ, second
  edition, 2004.

\bibitem{Folland}
G.~B. Folland.
\newblock {\em Real analysis}.
\newblock John Wiley \& Sons, Inc., New York, 1984.

\bibitem{Hairer}
M.~Hairer.
\newblock {\em Ergodic properties of {M}arkov processes}.
\newblock Lecture notes, University of Warwick. Available on
  http://www.hairer.org/notes/Markov.pdf, 2006.

\bibitem{Hairer-Mattingly-Scheutzow}
M.~Hairer, J.~C. Mattingly, and M.~Scheutzow.
\newblock Asymptotic coupling and a general form of {H}arris' theorem with
  applications to stochastic delay equations.
\newblock {\em Probab. Theory Related Fields}, 149(1-2):223--259, 2011.

\bibitem{Hernandez-Lerma-Lasserre}
O.~Hern{\'a}ndez-Lerma and J.~B. Lasserre.
\newblock On the classification of {M}arkov chains via occupation measures.
\newblock {\em Appl. Math. (Warsaw)}, 27(4):489--498, 2000.

\bibitem{Kallenberg}
O.~Kallenberg.
\newblock {\em Foundations of modern probability}.
\newblock Springer-Verlag, New York, 1997.

\bibitem{meynII}
S.~P. Meyn and R.~L. Tweedie.
\newblock Stability of {M}arkovian processes. {II}. {C}ontinuous-time processes
  and sampled chains.
\newblock {\em Adv. in Appl. Probab.}, 25(3):487--517, 1993.

\bibitem{meyn-tweedie-book}
S.~P. Meyn and R.~L. Tweedie.
\newblock {\em Markov chains and stochastic stability}.
\newblock Cambridge University Press, Cambridge, second edition, 2009.

\bibitem{rumunj}
R.~Miculescu.
\newblock Approximations by {L}ipschitz functions generated by extensions.
\newblock {\em Real Anal. Exchange}, 28(1):33--40, 2002/03.

\bibitem{Schilling}
R.~L. Schilling.
\newblock Conservativeness and extensions of feller semigroups.
\newblock {\em Positivity}, 2:239--256, 1998.

\bibitem{Schilling-Wang}
R.~L. Schilling and J.~Wang.
\newblock Strong {F}eller continuity of {F}eller processes and semigroups.
\newblock {\em Infin. Dimens. Anal. Quantum Probab. Relat. Top.},
  15(2):1250010, 28, 2012.

\bibitem{twedie-topological}
R.~L. Tweedie.
\newblock Topological conditions enabling use of {H}arris methods in discrete
  and continuous time.
\newblock {\em Acta Appl. Math.}, 34(1-2):175--188, 1994.

\bibitem{Villani}
C.~Villani.
\newblock {\em Optimal transport}.
\newblock Springer-Verlag, Berlin, 2009.

\end{thebibliography}

\end{document}